\providecommand{\U}[1]{\protect\rule{.1in}{.1in}}
\providecommand{\U}[1]{\protect\rule{.1in}{.1in}}
\providecommand{\U}[1]{\protect\rule{.1in}{.1in}}
\newtheorem{theorem}{Theorem}
\newtheorem{algorithm}[theorem]{Algorithm}
\newtheorem{condition}[theorem]{Condition}
\numberwithin{equation}{section}
\numberwithin{theorem}{section}
\newtheorem{definition}[theorem]{Definition}
\newtheorem{lemma}[theorem]{Lemma}
\newtheorem{problem}[theorem]{Problem}
\newtheorem{remark}[theorem]{Remark}
\newenvironment{proof}[1][Proof]{\textbf{#1.} }{\ \rule{0.5em}{0.5em}}
\begin{document}

\title{\textbf{A von Neumann Alternating Method for Finding Common Solutions to
Variational Inequalities}\bigskip}
\author{Yair Censor$^{1}$, Aviv Gibali$^{2}$ and Simeon Reich$^{2}$\bigskip\\$^{1}$Department of Mathematics, University of Haifa,\\Mt.\ Carmel, 31905 Haifa, Israel \bigskip\\$^{2}$Department of Mathematics,\\The Technion - Israel Institute of Technology\\Technion City, 32000 Haifa, Israel}
\date{\textit{Dedicated to Professor V. Lakshmikantham}\\
\textit{on the occasion of his retirement}\bigskip\\
August 16, 2011. Revised: January 16, 2012.}
\maketitle

\begin{abstract}
Modifying von Neumann's alternating projections algorithm, we obtain an
alternating method for solving the recently introduced Common Solutions to
Variational Inequalities Problem (CSVIP). For simplicity, we mainly confine
our attention to the two-set CSVIP, which entails finding common solutions to
two unrelated variational inequalities in Hilbert space.

\end{abstract}

\textbf{Keywords}: Alternating method, averaged operator, fixed point, Hilbert
space, inverse strongly monotone operator, metric projection, nonexpansive
operator, resolvent, variational inequality.\bigskip

\textbf{$2010$ MSC}: 47H05, 47H09, 47J20, 49J40

\section{Introduction}

A new problem, called the \textit{Common Solutions to Variational Inequalities
Problem }(CSVIP) has recently been introduced in \cite[Subsection
7.2]{cgr-svip} and further studied in \cite{cgrs10}. In \cite{cgr-svip} it was
considered as a special case of the \textit{Split Variational Inequality
Problem} (SVIP) introduced therein. The CSVIP consists of finding common
solutions to unrelated variational inequalities. In the present paper we
propose a new algorithm for solving the two-set CSVIP, which stems from the
classical von Neumann alternating projections algorithm \cite{Neumann}. We
also extend this algorithm to two methods for solving the general CSVIP, which
concerns any finite number of sets.

We first recall the general form of the CSVIP (for single-valued operators).

\begin{problem}
\label{prob:cvipp}Let $\mathcal{H}$ be a real Hilbert space. Let there be
given, for each $i=1,2,\ldots,N$, an operator $A_{i}:\mathcal{H}%
\rightarrow\mathcal{H}$ and a nonempty, closed and convex subset $K_{i}%
\subset\mathcal{H}$, with $\bigcap_{i=1}^{N}K_{i}\neq\emptyset$. The CSVIP
(for single-valued operators) is to find a point $x^{\ast}\in\bigcap_{i=1}%
^{N}K_{i}$ such that, for each $i=1,2,\ldots,N,$%
\begin{equation}
\left\langle A_{i}(x^{\ast}),x-x^{\ast}\right\rangle \geq0\text{ for all }x\in
K_{i}\text{, }i=1,2,\ldots,N. \label{eq:CSVIP}%
\end{equation}

\end{problem}

For simplicity, in this paper we mainly confine our attention to the case
where $i=2$. Denoting $A_{1}=f$, $A_{2}=g$ and the nonempty, closed and convex
subsets $K_{1}$ and $K_{2}$ by $C$ and $Q$, respectively, we get the following
two-set CSVIP.

\begin{problem}
\label{prob:csvip}Let $\mathcal{H}$ be a real Hilbert space, and let $C$ and
$Q$ be two nonempty closed and convex subsets of $\mathcal{H}$ with $C\cap
Q\neq\emptyset$. Given two operators $f$ and $g$ from $\mathcal{H}$ into
itself, the two-set CSVIP is to find a point $x^{\ast}\in C\cap Q$ such that
\begin{align}
\left\langle f(x^{\ast}),x-x^{\ast}\right\rangle  &  \geq0\text{ for all }x\in
C\label{eq:vip1}\\
&  \text{and}\nonumber\\
\left\langle g(x^{\ast}),y-x^{\ast}\right\rangle  &  \geq0\text{ for all }y\in
Q. \label{eq:vip2}%
\end{align}

\end{problem}

If we denote by $SOL(C,f)$ and $SOL(Q,g)$ the solution sets of (\ref{eq:vip1})
and (\ref{eq:vip2}), respectively, then Problem \ref{prob:csvip} is to find a
point $x^{\ast}\in SOL(C,f)\cap SOL(Q,g)$.

Looking at (\ref{eq:vip1}) separately, we get the well-known
\textit{Variational Inequality Problem} (VIP), first introduced by Hartman and
Stampacchia in 1966 (see \cite{hs}). The importance of VIPs in Nonlinear
Analysis and Optimization Theory stems from the fact that several fundamental
problems can be formulated as VIPs, e.g., constrained and unconstrained
minimization, finding solutions to systems of equations, and saddle-point
problems. See the book by Kinderlehrer and Stampacchia \cite{KS80} for a wide
range of other applications of VIPs. For an excellent treatise on variational
inequality problems in finite-dimensional spaces, see the two-volume book by
Facchinei and Pang \cite{FP03}. The books by Konnov \cite{Konnov-book} and
Patriksson \cite{Patriksson} contain extensive studies of VIPs including
applications, algorithms and numerical results.

Another motivation for defining and studying the CSVIP in \cite{cgr-svip,
cgrs10} originates in the simple observation that if we choose all $A_{i}=0$,
then the problem reduces to that of finding a point $x^{\ast}\in\bigcap
_{i=1}^{N}K_{i}$ in the nonempty intersection of a finite family of closed and
convex sets, which is the well-known \textit{Convex Feasibility Problem}
(CFP). If the sets $K_{i}$ are the fixed point sets of a family of operators
$T_{i}:\mathcal{H\rightarrow H}$, then the CFP is the \textit{Common Fixed
Point Problem} (CFPP). These problems have been intensively studied over the
past decades both theoretically (existence, uniqueness, and properties of
solutions) and algorithmically (devising iterative procedures which generate
sequences that converge, finitely or asymptotically, to a solution).

Our alternating method for solving the two-set CSVIP is inspired by von
Neumann's original alternating projections method. Von Neumann \cite{Neumann}
presented a method for calculating the orthogonal projection onto the
intersection of two closed subspaces in Hilbert space. Let $\mathcal{H}$ be a
real Hilbert space, and let $A$ and $B$ be closed subspaces. Choose
$x\in\mathcal{H}$ and construct the sequences $\left\{  a^{k}\right\}
_{k=0}^{\infty}$ and $\left\{  b^{k}\right\}  _{k=0}^{\infty}$ by%
\begin{equation}
\left\{
\begin{array}
[c]{l}%
b^{0}=x,\\
a^{k}=P_{A}(b^{k-1})\text{ and }b^{k}=P_{B}(a^{k}),\text{ }k=1,2,\ldots,
\end{array}
\right.  \label{eq:apm}%
\end{equation}
where $P_{A}$ and $P_{B}$ denote the orthogonal projection operators of
$\mathcal{H}$ onto $A$ and $B$, respectively. Von Neumann showed \cite[Lemma
22, page 475]{Neumann} that both sequences $\left\{  a^{k}\right\}
_{k=0}^{\infty}$ and $\left\{  b^{k}\right\}  _{k=0}^{\infty}$ converge
strongly to $P_{A\cap B}(x)$. This algorithm is known as von Neumann's
alternating projections method. Observe that not only the sequences converge
strongly, but also that their common limit is the nearest point to $x$ in
$A\cap B$. For recent elementary geometric proofs of von Neumann's result, see
\cite{kr1, kr2}. In 1965 Bregman \cite{Bregman} established the weak
convergence of the sequence of alternating nearest point mappings between two
closed and convex intersecting subsets of a Hilbert space. See also \cite{bbr,
br}. In 2005 Bauschke, Combettes and Reich \cite{bcr} studied the alternating
resolvents method for finding a common zero of two maximal monotone mappings
(see also the recent paper of Boikanyo and Moro\c{s}anu \cite{bm}). We propose
an alternating method which employs two averaged operators in the sense of
\cite{bbr}. In this connection, we note that not all averaged operators are
resolvents (of monotone mappings).

Our paper is organized as follows. Section \ref{sec:Preliminaries} contains
some preliminaries. In Section \ref{sec:Algorithm} we present our alternating
method for solving the two-set CSVIP and establish its convergence. In Section
\ref{sec:General-CSVIP} we extend our algorithm to two methods for solving the
general CSVIP.

\section{Preliminaries\label{sec:Preliminaries}}

Let $\mathcal{H}$ be a real Hilbert space with inner product $\langle
\cdot,\cdot\rangle$ and induced norm $\Vert\cdot\Vert,$ and let $D$ be a
nonempty,
closed and convex subset of $\mathcal{H}$. We write $x^{k}\rightharpoonup x$
to indicate that the sequence $\left\{  x^{k}\right\}  _{k=0}^{\infty}$
converges weakly to $x,$ and $x^{k}\rightarrow x$ to indicate that the
sequence $\left\{  x^{k}\right\}  _{k=0}^{\infty}$ converges strongly to $x.$

We now recall some definitions and properties of several classes of operators.

\begin{definition}
Let $h:\mathcal{H}\rightarrow\mathcal{H}$ be an operator and let
$D\subset\mathcal{H}.$

(i) The operator $h$ is called \texttt{Lipschitz continuous} on $D\subset H$
with constant $L>0$ if%
\begin{equation}
\Vert h(x)-h(y)\Vert\leq L\Vert x-y\Vert\text{\ for all\ }x,y\in D.
\end{equation}

(ii) The operator $h$ is called \texttt{nonexpansive} on $D$ if it is
$1$-Lipschitz continuous.

(iii) The operator $h$ is called \texttt{inverse strongly monotone} with
constant $\beta>0$ \texttt{(}$\beta$-ism\texttt{)} on $D$ if%
\begin{equation}
\langle h(x)-h(y),x-y\rangle\geq\beta\Vert h(x)-h(y)\Vert^{2}\text{ for all
}x,y\in D.
\end{equation}

(iv) The operator $h$ is called \texttt{firmly nonexpansive}
\texttt{\cite{Goebel}} on $D$ if%
\[
\left\langle h(x)-h(y),x-y\right\rangle \geq\left\Vert h(x)-h(y)\right\Vert
^{2}\text{ for all }x,y\in D\text{,}%
\]
in other words, $h$ is $1$-ism.

(v) The operator $h$ is called \texttt{averaged} \cite{bbr} if there exists a
nonexpansive operator $N:\mathcal{H}\rightarrow\mathcal{H}\ $and a number
$c\in(0,1)$ such that%
\begin{equation}
h=(1-c)I+cN.
\end{equation}
In this case, we say that $h$ is $c$-av \cite{byrne04}.

(vi) We say that a nonexpansive operator $h$ \texttt{satisfies Condition (W)}
\cite{dr} if whenever $\{x^{k}-y^{k}\}_{k=1}^{\infty}$ is bounded and $\Vert
x^{k}-y^{k}\Vert-\Vert h(x^{k})-h(y^{k})\Vert\rightarrow0$, it follows that
$(x^{k}-y^{k})-(h(x^{k})-h(y^{k}))\rightharpoonup0.$

(vii) The operator $h$ is called \texttt{strongly nonexpansive} \cite{br} if
it is nonexpansive and whenever $\{x^{k}-y^{k}\}_{k=1}^{\infty}$ is bounded
and $\Vert x^{k}-y^{k}\Vert-\Vert h(x^{k})-h(y^{k})\Vert\rightarrow0$, it
follows that $(x^{k}-y^{k})-(h(x^{k})-h(y^{k}))\rightarrow0$.
\end{definition}

\begin{definition}
Let $%
\mathbb{N}
$ be the set of natural numbers, $\{h_{1},h_{2},\ldots\}$ be a sequence of
operators, and $r:%
\mathbb{N}
\rightarrow%
\mathbb{N}
$. An unrestricted (or random) product of these operators is the sequence
$\{S_{n}\}_{n\in%
\mathbb{N}
}$ defined by $S_{n}=h_{r(n)}h_{r(n-1)}\cdots h_{r(1)}$.
\end{definition}

Note that inverse strong monotonicity is also known as the \texttt{Dunn
property} \cite{dunn, rc}. It is easy to see that a $\beta$-ism operator is
Lipschitz continuous with constant $1/\beta$. Some of the relations between
these classes of operators are given below. For more details, see\ Bruck and
Reich \cite{br},\ Baillon et al. \cite{bbr}, Goebel and Reich \cite{Goebel},
and Byrne \cite{byrne04}.

\begin{remark}
\label{remark:av}(i) An operator $h$ is averaged if and only if its complement
$G:=I-h$ is $\nu$-ism for some $\nu>1/2$.

(ii) The operator $h$ is firmly nonexpansive if and only if its complement
$I-h$ is firmly nonexpansive.

(iii) The operator $h$ is firmly nonexpansive if and only if it is $1/2$-averaged.

(iv) If $h_{1}$ and $h_{2}$ are $c_{1}$-av and $c_{2}$-av, respectively, then
their composition $S=h_{1}h_{2}$ is $(c_{1}+c_{2}-c_{1}c_{2})$-av.

(v) Every averaged operator is strongly nonexpansive and therefore satisfies
condition (W).
\end{remark}

Let $D$ be a closed and convex subset of $\mathcal{H}$. For every point
$x\in\mathcal{H},$\ there exists a unique nearest point in $D$, denoted by
$P_{D}(x)$. This point satisfies%
\begin{equation}
\left\Vert x-P_{D}\left(  x\right)  \right\Vert \leq\left\Vert x-y\right\Vert
\text{\textit{ }for all}\mathit{\ }y\in D.
\end{equation}
The operator $P_{D}$ is called the metric projection or the nearest point
mapping of $\mathcal{H}$ onto $D$. The metric projection $P_{D}$ is
characterized by the fact that $P_{D}\left(  x\right)  \in D$ and
\begin{equation}
\left\langle y-P_{D}\left(  x\right)  ,x-P_{D}\left(  x\right)  \right\rangle
\leq0\text{ for all }x\in\mathcal{H},\text{ }y\in D. \label{eq:Proj}%
\end{equation}
It is also well known that the operator $P_{D}$ is averaged (see, e.g.,
\cite[page 17]{Goebel}).

\begin{definition}
A sequence $\left\{  x^{k}\right\}  _{k=0}^{\infty}\subset\mathcal{H}$ is
called \texttt{Fej\'{e}r-monotone} with respect to $D$ if for every $u\in D$%
\begin{equation}
\Vert x^{k+1}-u\Vert\leq\Vert x^{k}-u\Vert\text{ for all }k\geq0.
\end{equation}

\end{definition}

Next we recall the definition of a maximal monotone mapping.

\begin{definition}
Let $M:\mathcal{H}\rightarrow2^{\mathcal{H}}\mathcal{\ }$be a set-valued
mapping defined on a real Hilbert space $\mathcal{H}$.

(i) The \texttt{resolvent} of $M$ with parameter $\lambda$ is the operator
$J_{\lambda}^{M}:=\left(  I+\lambda M\right)  ^{-1}$, where $I$ is the
identity operator.

(ii) $M$ is called a \texttt{maximal monotone mapping} if $M$ is
\texttt{monotone}, i.e.,%
\begin{equation}
\left\langle u-v,x-y\right\rangle \geq0,\text{ for all }u\in M(x)\text{ and
}v\in M(y),
\end{equation}
and the graph $G(M)$ of $M$,%
\begin{equation}
G(M):=\left\{  \left(  x,u\right)  \in H\times H\mid u\in M(x)\right\}  ,
\end{equation}
is not properly contained in the graph of any other monotone mapping.
\end{definition}

\begin{definition}
Let $C$ be a nonempty, closed and convex subset of $\mathcal{H}$. Denote by
$N_{C}\left(  v\right)  $ the \texttt{normal cone}\textit{ }of $C$ at $v\in
C$, i.e.,%
\begin{equation}
N_{C}\left(  v\right)  :=\left\{  z\in\mathcal{H}\mid\left\langle
z,y-v\right\rangle \leq0\text{ for all }y\in C\right\}  . \label{eq:nc}%
\end{equation}

\end{definition}

Consider now the variational inequality with respect to the set $D$ and the
operator $h$:%
\begin{equation}
\left\langle h(x^{\ast}),x-x^{\ast}\right\rangle \geq0\text{ for all }x\in D.
\end{equation}

Define the mapping $M$ as follows:%
\begin{equation}
M(v):=\left\{
\begin{array}
[c]{cc}%
h(v)+N_{D}\left(  v\right)  , & v\in D,\\
\emptyset, & \text{otherwise.}%
\end{array}
\right.
\end{equation}
Under a certain continuity assumption on $h$ (which every ism operator
satisfies), Rockafellar \cite[Theorem 5, p. 85]{Rockafellar} showed that $M$
is a maximal monotone mapping and $M^{-1}\left(  0\right)  =SOL(D,h)$.

\begin{remark}
\label{remark:resolvent} It is well known that for $\lambda>0$,

(i) $M$ is monotone if and only if the resolvent $J_{\lambda}^{M}$ of $M$ is
single-valued and firmly nonexpansive.

(ii) $M$ is maximal monotone if and only if $J_{\lambda}^{M}$ is
single-valued, firmly nonexpansive and its \texttt{domain} is all of
$\mathcal{H}$, where%
\begin{equation}
\operatorname*{dom}(J_{\lambda}^{M}):=\left\{  x\in\mathcal{H}\mid J_{\lambda
}^{M}(x)\neq\emptyset\right\}  .
\end{equation}

(iii)%
\begin{equation}
0\in M(x^{\ast})\Leftrightarrow x^{\ast}\in\operatorname{Fix}(J_{\lambda}%
^{M}), \label{eq:res-fix}%
\end{equation}
where $\operatorname{Fix}(J_{\lambda}^{M})$ denotes the fixed point set of
$J_{\lambda}^{M}.$
\end{remark}

It is known that the metric projection operator coincides with the resolvent
of the normal cone.

Now we recall the following lemma \cite{cgr-svip}.

\begin{lemma}
Let $\mathcal{H}$ be a real Hilbert space and let $D\subset\mathcal{H}$ be
nonempty, closed and convex. Let $h:\mathcal{H}\rightarrow\mathcal{H}$ be an
$\alpha$-ism operator. If $D\cap\{x\in\mathcal{H}\mid h(x)=0\}\neq\emptyset,$
then $x^{\ast}\in SOL(D,h)$ if and only if $h(x^{\ast})=0$.
\end{lemma}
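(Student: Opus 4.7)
The plan is to prove the two implications, where the nontrivial one uses the hypothesis that a zero of $h$ lies in $D$ together with the $\alpha$-ism property of $h$.

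For the easy direction ($\Leftarrow$), assuming $x^{\ast}\in D$ with $h(x^{\ast})=0$, the inequality $\langle h(x^{\ast}),x-x^{\ast}\rangle=0\geq 0$ holds trivially for every $x\in D$, so $x^{\ast}\in SOL(D,h)$.

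For the forward direction ($\Rightarrow$), the idea is to fix a witness $z\in D$ with $h(z)=0$ supplied by the hypothesis $D\cap\{x\in\mathcal{H}\mid h(x)=0\}\neq\emptyset$, and then plug $x=z$ into the variational inequality satisfied by $x^{\ast}$. First I would note that $x^{\ast}\in SOL(D,h)$ gives $x^{\ast}\in D$ and
\[
\langle h(x^{\ast}),z-x^{\ast}\rangle\geq 0.
\]
Next I would invoke the $\alpha$-ism property at the pair $(x^{\ast},z)$ together with $h(z)=0$ to obtain
\[
\langle h(x^{\ast}),x^{\ast}-z\rangle=\langle h(x^{\ast})-h(z),x^{\ast}-z\rangle\geq\alpha\Vert h(x^{\ast})-h(z)\Vert^{2}=\alpha\Vert h(x^{\ast})\Vert^{2}.
\]
Rewriting this as $\langle h(x^{\ast}),z-x^{\ast}\rangle\leq-\alpha\Vert h(x^{\ast})\Vert^{2}$ and combining with the previous inequality yields $0\leq-\alpha\Vert h(x^{\ast})\Vert^{2}$. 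Since $\alpha>0$, this forces $\Vert h(x^{\ast})\Vert=0$, i.e., $h(x^{\ast})=0$.

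There is really no main obstacle here; the whole proof is a one-line application of the defining inequality of an $\alpha$-ism operator combined with the test point $z$ coming from the standing hypothesis. The only subtlety worth highlighting is that the hypothesis $D\cap\{h=0\}\neq\emptyset$ is essential: without such a $z$ in $D$ one could not cancel the term $h(z)$ when converting the VI into a sign condition on $\Vert h(x^{\ast})\Vert^{2}$, and $SOL(D,h)$ might contain points where $h$ does not vanish.
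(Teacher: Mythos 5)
Your proof is correct. The paper itself does not prove this lemma---it merely recalls it from the earlier work \cite{cgr-svip}---but your argument is the standard one: the backward direction is immediate (under the implicit assumption $x^{\ast}\in D$), and the forward direction correctly combines the variational inequality tested at a zero $z\in D$ of $h$ with the $\alpha$-ism inequality $\langle h(x^{\ast})-h(z),x^{\ast}-z\rangle\geq\alpha\Vert h(x^{\ast})-h(z)\Vert^{2}$ to force $\Vert h(x^{\ast})\Vert=0$.
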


Using the characterization of the metric projection (\ref{eq:Proj}), we get
another connection between the solution set of a variational inequality
problem and the fixed point set of a certain operator, namely, for any
$\lambda>0,$%
\begin{equation}
SOL(D,h)=\operatorname*{Fix}\left(  P_{D}(I-\lambda h)\right)  .
\label{eq:fix-vip}%
\end{equation}
Indeed,%
\begin{equation}
x^{\ast}\in\operatorname*{Fix}\left(  P_{D}\left(  I-\lambda h\right)
\right)  \Leftrightarrow P_{D}\left(  x^{\ast}-\lambda h(x^{\ast})\right)
=x^{\ast}%
\end{equation}
and by (\ref{eq:Proj})$,$ we have for all $x\in D$ and $\lambda>0,$%
\begin{align}
0  &  \leq\left\langle x^{\ast}-\lambda h(x^{\ast})-P_{D}\left(  x^{\ast
}-\lambda h(x^{\ast})\right)  ,P_{D}\left(  x^{\ast}-\lambda h(x^{\ast
})\right)  -x\right\rangle \nonumber\\
&  =\left\langle x^{\ast}-\lambda h(x^{\ast})-x^{\ast},x^{\ast}-x\right\rangle
=\left\langle -\lambda h(x^{\ast}),x^{\ast}-x\right\rangle \nonumber\\
&  =\lambda\left\langle h(x^{\ast}),x-x^{\ast}\right\rangle .
\end{align}

Next we present another useful property of the operator $P_{D}\left(
I-\lambda h\right)  $ (cf. \cite{Masad+Reich}).

\begin{lemma}
\label{Lemma:av} Let $\mathcal{H}$ be a real Hilbert space and let
$D\subset\mathcal{H}$ be nonempty, closed and convex. Let $h:\mathcal{H}%
\rightarrow\mathcal{H}$ be a $\beta$-ism operator on $\mathcal{H}$. If
$\lambda\in(0,2\beta),$ then the operator $P_{D}\left(  I-\lambda h\right)  $
is averaged.
\end{lemma}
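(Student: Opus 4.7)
The plan is to exhibit $P_D(I - \lambda h)$ as a composition of two averaged operators and then invoke Remark \ref{remark:av}(iv). The main work is to verify that $I - \lambda h$ is itself averaged; once that is in hand, the conclusion follows essentially for free since $P_D$ is known to be averaged (indeed firmly nonexpansive, hence $1/2$-av).

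First I would show that $\lambda h$ inherits inverse strong monotonicity from $h$ with the rescaled constant $\beta/\lambda$. A one-line computation using the defining inequality of $\beta$-ism gives
\begin{equation}
\langle \lambda h(x) - \lambda h(y), x-y\rangle
= \lambda \langle h(x)-h(y), x-y\rangle
\geq \lambda \beta \|h(x)-h(y)\|^{2}
= \tfrac{\beta}{\lambda}\|\lambda h(x) - \lambda h(y)\|^{2},
\end{equation}
so $\lambda h$ is $(\beta/\lambda)$-ism. The hypothesis $\lambda \in (0,2\beta)$ is exactly what is needed to guarantee $\beta/\lambda > 1/2$. Setting $G := \lambda h$, which is the complement of $I - \lambda h$, Remark \ref{remark:av}(i) then tells us that $I - \lambda h$ is averaged, say $c_{1}$-av for some $c_{1}\in(0,1)$.

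Second, the metric projection $P_{D}$ is firmly nonexpansive (by the characterization \eqref{eq:Proj}, standard), hence $1/2$-averaged by Remark \ref{remark:av}(iii); call this $c_{2} = 1/2$. Applying Remark \ref{remark:av}(iv) to the composition, we conclude that $P_{D}(I - \lambda h)$ is $(c_{1}+c_{2}-c_{1}c_{2})$-av, and since $c_{1},c_{2}\in(0,1)$ this constant also lies in $(0,1)$, so $P_{D}(I - \lambda h)$ is averaged.

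I do not foresee a real obstacle here: the only substantive step is the rescaling $h \mapsto \lambda h$ combined with the ism/averaged correspondence in Remark \ref{remark:av}(i), and the numerical condition $\lambda < 2\beta$ is tight precisely because $\nu > 1/2$ is required there. The remainder is bookkeeping with averagedness constants via Remark \ref{remark:av}(iii)--(iv).
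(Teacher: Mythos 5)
Your proof is correct, and its overall architecture coincides with the paper's: both arguments reduce the lemma to showing that $I-\lambda h$ is averaged and then invoke the averagedness of $P_{D}$ together with the composition rule of Remark \ref{remark:av}(iv). Where you differ is in how the central step is established. The paper gives a self-contained, hands-on verification: it picks $c\in(0,1)$ with $c\geq\lambda/(2\beta)$, writes $I-\lambda h=(1-c)I+cN$ with $N:=I-\frac{\lambda}{c}h$, and checks directly via the $\beta$-ism inequality that $\left\Vert N(x)-N(y)\right\Vert\leq\left\Vert x-y\right\Vert$. You instead observe that $\lambda h$, the complement of $I-\lambda h$, is $(\beta/\lambda)$-ism by a one-line rescaling, and that $\lambda\in(0,2\beta)$ is exactly the condition $\beta/\lambda>1/2$, so Remark \ref{remark:av}(i) applies. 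Your route is shorter and makes transparent why the bound $2\beta$ is the right threshold, but it leans on the unproved equivalence in Remark \ref{remark:av}(i) (cited to the literature); the paper's computation is in effect a direct proof of the relevant direction of that equivalence in the case at hand, so the two arguments carry essentially the same mathematical content. Your bookkeeping with the constants ($c_{2}=1/2$ for $P_{D}$, and $c_{1}+c_{2}-c_{1}c_{2}\in(0,1)$) is correct, though more precision than the lemma requires.
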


\begin{proof}
We first prove that the operator $I-\lambda h$ is averaged. More precisely, we
claim that if $h$ is $\beta$-ism, then the operator $I-\lambda h$ is averaged
for $\lambda\in(0,2\beta)$. Indeed, take $c\in(0,1)$ such that $c\geq
\lambda/(2\beta)$ and set $N:=I-{\displaystyle}\frac{{\displaystyle}\lambda
}{{\displaystyle}c}h.$ Then $I-\lambda h=(1-c)I+cN$ and $N$ is nonexpansive:%
\begin{align}
&  \left\Vert x-y\right\Vert ^{2}-\left\Vert N(x)-N(y)\right\Vert
^{2}\nonumber\\
&  =\left\Vert x-y\right\Vert ^{2}-\left(  \left\Vert x-y\right\Vert
^{2}-2\frac{\lambda}{c}\left\langle h(x)-h(y),x-y\right\rangle +\left(
\frac{\lambda}{c}\right)  ^{2}\Vert h(x)-h(y)\Vert^{2}\right) \nonumber\\
&  =2\frac{\lambda}{c}\left\langle h(x)-h(y),x-y\right\rangle -\left(
\frac{\lambda}{c}\right)  ^{2}\Vert h(x)-h(y)\Vert^{2}\nonumber\\
&  \geq\frac{\lambda}{c}\left(  2\beta\Vert h(x)-h(y)\Vert^{2}-\frac{\lambda
}{c}\Vert h(x)-h(y)\Vert^{2}\right) \nonumber\\
&  =\frac{\lambda}{c}\left(  2\beta-\frac{\lambda}{c}\right)  \Vert
h(x)-h(y)\Vert^{2}\geq0.
\end{align}
Now, since the metric projection $P_{D}$ is averaged, so is the composition
$P_{D}\left(  I-\lambda h\right)  $ (see Remark \ref{remark:av}(iii)).
\end{proof}

While the metric projection operator is the resolvent operator of the normal
cone operator, the composed operator $P_{D}\left(  I-\lambda h\right)  $ need
not be a resolvent of a monotone mapping.

\begin{lemma}
\label{Lemma:fix}\cite{br,byrne04} If $U:\mathcal{H}\rightarrow\mathcal{H}$ an
$V:\mathcal{H}\rightarrow\mathcal{H}$ are averaged operators and
$\operatorname*{Fix}\left(  U\right)  \cap\operatorname*{Fix}\left(  V\right)
\neq\emptyset$, then $\operatorname*{Fix}\left(  U\right)  \cap
\operatorname*{Fix}\left(  V\right)  =\operatorname*{Fix}\left(  UV\right)
=\operatorname*{Fix}\left(  VU\right)  .$
\end{lemma}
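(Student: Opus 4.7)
The inclusions $\operatorname{Fix}(U)\cap\operatorname{Fix}(V)\subseteq\operatorname{Fix}(UV)$ and $\operatorname{Fix}(U)\cap\operatorname{Fix}(V)\subseteq\operatorname{Fix}(VU)$ are immediate from the definitions, since $Vx=x=Ux$ gives $UVx=Ux=x$ and $VUx=Vx=x$. The substantive content therefore lies in the two reverse inclusions, and by the symmetric roles of $U$ and $V$ it is enough to prove $\operatorname{Fix}(UV)\subseteq\operatorname{Fix}(U)\cap\operatorname{Fix}(V)$. I would exploit the nonemptiness hypothesis by fixing, once and for all, a reference point $z\in\operatorname{Fix}(U)\cap\operatorname{Fix}(V)$ to act as an anchor in every subsequent norm comparison.

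Given an arbitrary $x\in\operatorname{Fix}(UV)$, the nonexpansiveness of $U$ and $V$ combined with $Uz=Vz=z$ yields at once the chain
\begin{equation}
\|x-z\|=\|UVx-Uz\|\leq\|Vx-Vz\|\leq\|x-z\|,
\end{equation}
so both inequalities are in fact equalities, and in particular $\|Vx-Vz\|=\|x-z\|$. The remaining task is to upgrade this norm equality to the vector equality $Vx-Vz=x-z$; combined with $Vz=z$ this gives $Vx=x$, and then $Ux=U(Vx)=UVx=x$ delivers the desired conclusion $x\in\operatorname{Fix}(U)\cap\operatorname{Fix}(V)$.

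The key tool for carrying out this upgrade is Remark \ref{remark:av}(v), which records that every averaged operator is strongly nonexpansive in the sense of Definition 2.1(vii). I would apply this property to $V$ with the constant sequences $x^{k}\equiv x$ and $y^{k}\equiv z$: the sequence $\{x^{k}-y^{k}\}$ is trivially bounded, the quantity $\|x^{k}-y^{k}\|-\|Vx^{k}-Vy^{k}\|$ is identically zero and hence converges to zero, and the defining conclusion $(x^{k}-y^{k})-(Vx^{k}-Vy^{k})\to 0$ then forces the exact identity $Vx-Vz=x-z$. I do not anticipate any serious obstacle, since the one nontrivial implication in play, namely that averagedness entails strong nonexpansiveness, has already been absorbed into Remark \ref{remark:av}(v); the remaining argument is just the two-line chain of inequalities above, plus a verbatim symmetric re-run with $U$ and $V$ interchanged to obtain $\operatorname{Fix}(VU)\subseteq\operatorname{Fix}(U)\cap\operatorname{Fix}(V)$.
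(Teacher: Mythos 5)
Your proof is correct. Note that the paper itself offers no proof of this lemma; it is quoted from Bruck--Reich \cite{br} and Byrne \cite{byrne04}, so the only comparison available is with those sources. Your argument is essentially the Bruck--Reich one: the easy inclusions are immediate, and for $\operatorname{Fix}(UV)\subseteq\operatorname{Fix}(U)\cap\operatorname{Fix}(V)$ you anchor at a common fixed point $z$, squeeze $\Vert x-z\Vert=\Vert UVx-UVz\Vert\leq\Vert Vx-Vz\Vert\leq\Vert x-z\Vert$ into equalities, and then invoke strong nonexpansiveness of $V$ (via Remark \ref{remark:av}(v), applied to constant sequences) to convert the norm equality $\Vert Vx-Vz\Vert=\Vert x-z\Vert$ into the vector identity $Vx-Vz=x-z$, whence $Vx=x$ and then $Ux=U(Vx)=x$. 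This is clean and uses only facts the paper records. The alternative route in \cite{byrne04} would instead use the quantitative averagedness inequality $\Vert h(x)-h(y)\Vert^{2}\leq\Vert x-y\Vert^{2}-\frac{1-c}{c}\Vert(I-h)(x)-(I-h)(y)\Vert^{2}$ for a $c$-averaged $h$, which yields $Vx-Vz=x-z$ directly without passing through the sequential definition of strong nonexpansiveness; that version is marginally more self-contained since it does not rely on the unproved Remark \ref{remark:av}(v), but your use of that remark is legitimate because the paper states it as a known fact. The symmetric re-run for $\operatorname{Fix}(VU)$ is exactly as you say.
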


The next lemma was proved by Takahashi and Toyoda \cite[Lemma 3.2]{Takahashi}.
In this connection, see also \cite[Proposition 2.1]{Reich77}.

\begin{lemma}
\label{Lemma:Takahashi} Let $\mathcal{H}$ be a real Hilbert space and let
$D\subset\mathcal{H}$ be nonempty, closed and convex. Let the sequence
$\left\{  x^{k}\right\}  _{k=0}^{\infty}\subset\mathcal{H}$ be
Fej\'{e}r-monotone with respect to $D$. Then the sequence $\left\{
P_{D}\left(  x^{k}\right)  \right\}  _{k=0}^{\infty}$ converges strongly to
some $z\in D.$
\end{lemma}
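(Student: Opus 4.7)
The plan is to show that $\{u^k\}_{k=0}^{\infty}$, where $u^k := P_D(x^k)$, is a Cauchy sequence in the closed set $D$, and hence converges strongly to some $z \in D$. The argument rests on two ingredients: the nonincreasing behavior of $d_k := \|x^k - u^k\| = d(x^k, D)$, and the projection inequality \eqref{eq:Proj}.

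First I would verify that the distance sequence $\{d_k\}$ is nonincreasing. Since $u^k \in D$, Fej\'er monotonicity applied to the point $u^k \in D$ gives $\|x^{k+1} - u^k\| \leq \|x^k - u^k\| = d_k$. On the other hand, $u^{k+1}$ is by definition the nearest point of $D$ to $x^{k+1}$, so $d_{k+1} = \|x^{k+1} - u^{k+1}\| \leq \|x^{k+1} - u^k\| \leq d_k$. Being nonincreasing and bounded below by $0$, the sequence $\{d_k\}$ converges to some $d \geq 0$, and in particular $d_k^2 - d_m^2 \to 0$ as $k,m \to \infty$.

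Next I would establish the Cauchy estimate. Fix $m \geq k$. The projection characterization \eqref{eq:Proj} applied at $x^m$ with the point $u^k \in D$ yields $\langle u^k - u^m, x^m - u^m \rangle \leq 0$, i.e., $\langle x^m - u^m, u^m - u^k \rangle \geq 0$. Expanding
\begin{equation}
\|x^m - u^k\|^2 = \|x^m - u^m\|^2 + 2\langle x^m - u^m, u^m - u^k\rangle + \|u^m - u^k\|^2,
\end{equation}
and discarding the nonnegative inner product term, I obtain $\|u^m - u^k\|^2 \leq \|x^m - u^k\|^2 - \|x^m - u^m\|^2$. Since $u^k \in D$, Fej\'er monotonicity gives $\|x^m - u^k\| \leq \|x^k - u^k\| = d_k$, and by definition $\|x^m - u^m\| = d_m$. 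Combining these, $\|u^m - u^k\|^2 \leq d_k^2 - d_m^2$, which tends to $0$ as $k,m \to \infty$ by the first step.

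Therefore $\{u^k\}$ is Cauchy in $\mathcal{H}$, hence strongly convergent to some limit $z$; since each $u^k \in D$ and $D$ is closed, $z \in D$, as required. The only mildly subtle point is the sign-tracking in the Pythagorean expansion, where one must apply the projection inequality \eqref{eq:Proj} at $x^m$ (not $x^k$) to exploit the Fej\'er bound $\|x^m - u^k\| \leq d_k$; once the right indices are paired the estimate is immediate.
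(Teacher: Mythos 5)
Your proof is correct and complete. The paper itself gives no proof of this lemma (it is quoted from Takahashi and Toyoda, Lemma 3.2, where exactly this argument appears): one shows the distance sequence $d_{k}=\Vert x^{k}-P_{D}(x^{k})\Vert$ is nonincreasing via Fej\'{e}r monotonicity, and then combines the projection characterization (\ref{eq:Proj}) with the Pythagorean expansion to get $\Vert P_{D}(x^{m})-P_{D}(x^{k})\Vert^{2}\leq d_{k}^{2}-d_{m}^{2}$, which yields the Cauchy property. Your index bookkeeping (applying (\ref{eq:Proj}) at $x^{m}$ and the Fej\'{e}r bound to the point $P_{D}(x^{k})$) is exactly right.
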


Next we recall a theorem of Opial's \cite{Opial67}, which is also known in the
literature as the Krasnosel'ski\u{\i}-Mann theorem.

\begin{theorem}
\label{The:Opial} Let $\mathcal{H}$ be a real Hilbert space and
let $D\subset
\mathcal{H}$ be nonempty, closed and convex. Assume that $h:D\rightarrow D$ is
an averaged operator with $\operatorname*{Fix}(h)\neq\emptyset$. Then, for an
arbitrary $x^{0}\in D$, the sequence $\left\{  x^{k+1}=h(x^{k})\right\}
_{k=0}^{\infty}$ converges weakly to $z\in\operatorname*{Fix}(h)$.
\end{theorem}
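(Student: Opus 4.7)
The plan is to combine three classical ingredients: Fej\'{e}r monotonicity of the iterates with respect to $\operatorname{Fix}(h)$, the asymptotic regularity property $x^{k}-h(x^{k})\to 0$, and a demiclosedness argument to identify the weak cluster points.

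First I would exploit averagedness by writing $h=(1-c)I+cN$ with $N:D\rightarrow D$ nonexpansive and $c\in(0,1)$, and fixing an arbitrary $p\in\operatorname{Fix}(h)=\operatorname{Fix}(N)$. Expanding $x^{k+1}-p=(1-c)(x^{k}-p)+c(N(x^{k})-p)$ via the convex-combination identity $\|\lambda a+(1-\lambda)b\|^{2}=\lambda\|a\|^{2}+(1-\lambda)\|b\|^{2}-\lambda(1-\lambda)\|a-b\|^{2}$, and then discarding the $\|N(x^{k})-p\|^{2}$ term using $\|N(x^{k})-p\|\leq\|x^{k}-p\|$, I expect to reach an inequality of the form $\|x^{k+1}-p\|^{2}\leq\|x^{k}-p\|^{2}-\tfrac{1-c}{c}\|x^{k}-x^{k+1}\|^{2}$, where the constant comes from rewriting $x^{k}-N(x^{k})=c^{-1}(x^{k}-h(x^{k}))$. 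This single estimate delivers both Fej\'{e}r monotonicity with respect to $\operatorname{Fix}(h)$ (hence boundedness of $\{x^{k}\}$ and existence of $\lim_{k}\|x^{k}-p\|$ for each such $p$) and, by telescoping, the summability $\sum_{k}\|x^{k}-x^{k+1}\|^{2}<\infty$, forcing $x^{k}-h(x^{k})\to 0$.

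Next, let $z$ be any weak cluster point of the bounded sequence $\{x^{k}\}$ and fix a subsequence $x^{k_{j}}\rightharpoonup z$; then $z\in D$ since $D$ is weakly closed as a closed convex set. Because $(I-N)(x^{k_{j}})=c^{-1}(x^{k_{j}}-h(x^{k_{j}}))\to 0$, I would invoke the demiclosedness of $I-N$ at zero (Browder's principle, valid for nonexpansive operators in Hilbert space through Opial's inequality) to conclude $z\in\operatorname{Fix}(N)=\operatorname{Fix}(h)$.

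Finally, to upgrade subsequential weak convergence to weak convergence of the whole sequence, I would run the standard Opial argument: if $z_{1}\neq z_{2}$ were two weak cluster points, both in $\operatorname{Fix}(h)$, then $\lim_{k}\|x^{k}-z_{i}\|$ exists for $i=1,2$ by Fej\'{e}r monotonicity, and Opial's inequality applied along subsequences weakly converging to $z_{1}$ and to $z_{2}$ respectively produces a contradiction, forcing $z_{1}=z_{2}$. The main obstacle is the demiclosedness step: this is where the Hilbert-space geometry genuinely enters, through Opial's inequality, rather than just the algebraic averaged structure; the Fej\'{e}r/asymptotic regularity step is inner-product bookkeeping with the convex-combination identity, and the final uniqueness-of-weak-limit step is a routine application of Opial's lemma.
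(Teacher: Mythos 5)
Your proposal is correct: the Fej\'{e}r estimate $\Vert x^{k+1}-p\Vert^{2}\leq\Vert x^{k}-p\Vert^{2}-\frac{1-c}{c}\Vert x^{k}-x^{k+1}\Vert^{2}$ follows exactly as you describe from the convex-combination identity, it yields asymptotic regularity by telescoping, and the demiclosedness of $I-N$ at zero together with Opial's uniqueness argument correctly identifies the single weak limit in $\operatorname{Fix}(N)=\operatorname{Fix}(h)$. Note, however, that the paper offers no proof of this statement at all --- it is recalled as a known theorem of Opial (the Krasnosel'ski\u{\i}--Mann theorem) with a citation --- so there is nothing to compare against; your argument is simply the standard proof of that cited result, and the only cosmetic point is that for $h:D\rightarrow D$ the operator $N=c^{-1}\bigl(h-(1-c)I\bigr)$ is defined and nonexpansive on $D$ rather than on all of $\mathcal{H}$, which affects nothing.
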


\begin{remark}
The convergence obtained in Theorem \ref{The:Opial} is not strong in general
\cite{gl, bmr}.
\end{remark}

\section{The Algorithm\label{sec:Algorithm}}

In this section we introduce our modified von Neumann alternating method for
solving the two-set CSVIP (\ref{eq:vip1}) and (\ref{eq:vip2}). Let
$\Gamma:=\Gamma(C,Q,f,g):=SOL(C,f)\cap SOL(Q,g).$

The following conditions are needed for our convergence theorem.

\begin{condition}
\label{Condition:a} The operators $f:\mathcal{H}\rightarrow\mathcal{H}$ and
$g:\mathcal{H}\rightarrow\mathcal{H}$ are $\alpha_{1}$-ism and $\alpha_{2}%
$-ism, respectively.
\end{condition}

\begin{condition}
\label{Condition:b} $\lambda\in(0,2\alpha),$ where $\alpha:=\min\{\alpha
_{1},\alpha_{2}\}.$
\end{condition}

\begin{condition}
\label{Condition:c} $\Gamma\neq\emptyset.$
\end{condition}

\begin{algorithm}
\label{Alg-Alternating}$\left.  {}\right.  $

\textbf{Initialization:} Select an arbitrary starting point $x^{0}%
\in\mathcal{H}$.

\textbf{Iterative step:} Given the current iterate $x^{k},$ compute%
\begin{equation}
y^{k}=\left(  P_{Q}(I-\lambda g)\right)  \left(  x^{k}\right)  \text{ and
}x^{k+1}=\left(  P_{C}(I-\lambda f)\right)  \left(  y^{k}\right)  .
\label{eq:alt2}%
\end{equation}

\end{algorithm}

Note that (\ref{eq:alt2}) is actually an alternating method, that is,%
\begin{align}
x^{k+1}  &  =\left(  P_{C}(I-\lambda f)\right)  \left(  P_{Q}(I-\lambda
g)\right)  \left(  x^{k}\right) \nonumber\\
&  =P_{C}\left(  P_{Q}\left(  x^{k}-\lambda g\left(  x^{k}\right)  \right)
-\lambda f\left(  P_{Q}\left(  x^{k}-\lambda g\left(  x^{k}\right)  \right)
\right)  \right)  . \label{eq:alt1}%
\end{align}

An illustration of the iterative step of Algorithm \ref{Alg-Alternating} is
presented in Figure \ref{Fig:1}.%
\begin{figure}
[h]
\begin{center}
\includegraphics[
natheight=5.624700in,
natwidth=7.499600in,
height=3.8311in,
width=5.3385in
]%
{Alternating.jpg}%
\caption{Illustration of the iterative step of Algorithm \ref{Alg-Alternating}%
.}%
\label{Fig:1}%
\end{center}
\end{figure}

\begin{theorem}
\label{Theorem:1} Let $\mathcal{H}$ be a real Hilbert space, and let $C$, $Q$
be two nonempty closed and convex subsets of $\mathcal{H}$. Assume that
Conditions \ref{Condition:a}-\ref{Condition:c} hold and set $\alpha
:=\min\{\alpha_{1},\alpha_{2}\}$. Then any sequence $\left\{  x^{k}\right\}
_{k=0}^{\infty}$ generated by Algorithm \ref{Alg-Alternating} converges weakly
to a point $x^{\ast}\in\Gamma,$ and furthermore,%
\begin{equation}
x^{\ast}=\lim_{k\rightarrow\infty}P_{\Gamma}(x^{k}).
\end{equation}

\end{theorem}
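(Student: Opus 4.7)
The plan is to reduce everything to the composite operator $T:=\bigl(P_{C}(I-\lambda f)\bigr)\bigl(P_{Q}(I-\lambda g)\bigr)$ and apply the machinery already assembled in the preliminaries.

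First, I would verify that the iteration is just $x^{k+1}=T(x^{k})$ and that $T$ is averaged with $\operatorname{Fix}(T)=\Gamma$. Since $\lambda\in(0,2\alpha)\subset(0,2\alpha_{1})\cap(0,2\alpha_{2})$, Lemma \ref{Lemma:av} tells us that both $P_{C}(I-\lambda f)$ and $P_{Q}(I-\lambda g)$ are averaged; by Remark \ref{remark:av}(iv) so is their composition $T$. By (\ref{eq:fix-vip}) we have $\operatorname{Fix}\bigl(P_{C}(I-\lambda f)\bigr)=SOL(C,f)$ and $\operatorname{Fix}\bigl(P_{Q}(I-\lambda g)\bigr)=SOL(Q,g)$, and since $\Gamma\neq\emptyset$ (Condition \ref{Condition:c}), Lemma \ref{Lemma:fix} yields $\operatorname{Fix}(T)=SOL(C,f)\cap SOL(Q,g)=\Gamma$.

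Second, I would invoke Theorem \ref{The:Opial} with the operator $T$ and the convex set $D=\mathcal{H}$: because $T$ is averaged and $\operatorname{Fix}(T)=\Gamma\neq\emptyset$, the iterates $\{x^{k}\}$ converge weakly to some $x^{\ast}\in\Gamma$. This settles the weak-convergence half of the theorem.

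Third, I would establish the identification $x^{\ast}=\lim_{k\to\infty}P_{\Gamma}(x^{k})$. The nonexpansivity of $T$ together with $\Gamma=\operatorname{Fix}(T)$ gives, for every $u\in\Gamma$,
\begin{equation}
\|x^{k+1}-u\|=\|T(x^{k})-T(u)\|\leq\|x^{k}-u\|,
\end{equation}
so $\{x^{k}\}$ is Fej\'er-monotone with respect to $\Gamma$. Lemma \ref{Lemma:Takahashi} then produces a strong limit $z:=\lim_{k\to\infty}P_{\Gamma}(x^{k})\in\Gamma$. To see that $z=x^{\ast}$, I would use the variational characterization (\ref{eq:Proj}) of $P_{\Gamma}$ with the test point $x^{\ast}\in\Gamma$:
\begin{equation}
\bigl\langle x^{\ast}-P_{\Gamma}(x^{k}),\,x^{k}-P_{\Gamma}(x^{k})\bigr\rangle\leq 0,
\end{equation}
and pass to the limit, combining the weak convergence $x^{k}\rightharpoonup x^{\ast}$ with the strong convergence $P_{\Gamma}(x^{k})\to z$, to obtain $\langle x^{\ast}-z,\,x^{\ast}-z\rangle\leq 0$, hence $x^{\ast}=z$.

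The only potentially delicate step is checking that the weak/strong limits can be paired correctly in the inner product above, but since one factor converges strongly and the other weakly, the standard weak--strong continuity of the inner product suffices. Everything else is a bookkeeping exercise assembling Lemmas \ref{Lemma:av}, \ref{Lemma:fix}, \ref{Lemma:Takahashi}, Theorem \ref{The:Opial}, and Remark \ref{remark:av}(iv).
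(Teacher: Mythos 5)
Your proposal is correct and follows essentially the same route as the paper's proof: averagedness of the composition via Lemma \ref{Lemma:av} and Remark \ref{remark:av}, weak convergence via Theorem \ref{The:Opial}, identification of $\operatorname{Fix}(T)$ with $\Gamma$ via Lemma \ref{Lemma:fix} and (\ref{eq:fix-vip}), Fej\'er-monotonicity from nonexpansivity, and the final identification via Lemma \ref{Lemma:Takahashi} together with the projection characterization (\ref{eq:Proj}). The only cosmetic difference is that you establish $\operatorname{Fix}(T)=\Gamma$ before invoking Opial's theorem while the paper does it afterwards; the paper also explicitly notes that $SOL(C,f)$ and $SOL(Q,g)$ are closed and convex so that $P_{\Gamma}$ is well defined, a point worth adding to your write-up.
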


\begin{proof}
Let $\lambda\in(0,2\alpha).$ By Lemma \ref{Lemma:av}, the operators
$P_{C}(I-\lambda f)$ and $P_{Q}(I-\lambda g)$ are averaged and so is their
composition $\left(  P_{C}(I-\lambda f)\right)  (P_{Q}(I-\lambda g))$ (Remark
\ref{remark:av}). Since $\Gamma\neq\emptyset,$ Opial's theorem (Theorem
\ref{The:Opial}) guarantees that any sequence $\left\{  x^{k}\right\}
_{k=0}^{\infty}$ generated by Algorithm \ref{Alg-Alternating} converges weakly
to a point $x^{\ast}\in\operatorname*{Fix}\left(  \left(  P_{C}(I-\lambda
f)\right)  (P_{Q}(I-\lambda g))\right)  .$ Combining the assumption
$\Gamma\neq\emptyset$ with Lemma \ref{Lemma:fix}, we obtain%
\begin{align}
\operatorname*{Fix}(P_{C}(I-\lambda f))\cap\operatorname*{Fix}(P_{Q}(I-\lambda
g))  &  =\operatorname*{Fix}\left(  \left(  P_{C}(I-\lambda f)\right)
(P_{Q}(I-\lambda g))\right) \nonumber\\
&  =\operatorname*{Fix}\left(  (P_{Q}(I-\lambda g))\left(  P_{C}(I-\lambda
f)\right)  \right)  ,
\end{align}
which means that $x^{\ast}\in\operatorname*{Fix}(P_{C}(I-\lambda f)))$ and
$x^{\ast}\in\operatorname*{Fix}(P_{Q}(I-\lambda g))$, and therefore by
(\ref{eq:fix-vip}) $x^{\ast}\in\Gamma.$ Finally, let $z\in\Gamma$, i.e., $z\in
SOL(C,f)\cap SOL(Q,g)$. Then $P_{Q}(z-\lambda g(z))=P_{C}(z-\lambda f(z))=z.$
Since the operators $P_{C}(I-\lambda f)$ and $P_{Q}(I-\lambda g)$ are
averaged, they are also nonexpansive. Thus%
\begin{align}
\left\Vert x^{k+1}-z\right\Vert ^{2}  &  =\left\Vert \left(  P_{C}(I-\lambda
f)\right)  (P_{Q}(I-\lambda g))(x^{k})-z\right\Vert ^{2}\nonumber\\
&  =\left\Vert \left(  P_{C}(I-\lambda f)\right)  (P_{Q}(I-\lambda
g))(x^{k})-P_{C}(I-\lambda f)(z)\right\Vert ^{2}\nonumber\\
&  \leq\left\Vert P_{Q}(x^{k}-\lambda g(x^{k}))-z\right\Vert ^{2}\nonumber\\
&  =\left\Vert P_{Q}(x^{k}-\lambda g(x^{k}))-P_{Q}(z-\lambda g(z))\right\Vert
^{2}\nonumber\\
&  \leq\left\Vert x^{k}-z\right\Vert ^{2}.
\end{align}
So%
\begin{equation}
\left\Vert x^{k+1}-z\right\Vert ^{2}\leq\left\Vert x^{k}-z\right\Vert ^{2},
\label{eq:Fejer}%
\end{equation}
which means that the sequence $\left\{  x^{k}\right\}  _{k=0}^{\infty}$ is
Fej\'{e}r-monotone with respect to $\Gamma$. Now, put%
\begin{equation}
u^{k}=P_{\Gamma}(x^{k}).
\end{equation}
Since the operators $P_{C}(I-\lambda f)$ and $P_{Q}(I-\lambda g)$ are
nonexpansive, it follows from (\ref{eq:fix-vip}) that the sets $SOL(C,f)\ $and
$SOL(Q,g)$ are nonempty, closed and convex (see \cite[Proposition 5.3, page
25]{Goebel}). In addition, since $\Gamma\neq\emptyset,$ each $u^{k}$ is well
defined. So, applying (\ref{eq:Proj}) with $D=\Gamma$ and $x=x^{k},$ we get%
\begin{equation}
\left\langle y-P_{\Gamma}\left(  x^{k}\right)  ,x^{k}-P_{\Gamma}\left(
x^{k}\right)  \right\rangle \leq0\text{ for all }k\geq0\text{ and }y\in\Gamma.
\end{equation}
Taking, in particular, $y=x^{\ast}\in\Gamma,$ we obtain%
\begin{equation}
\left\langle x^{\ast}-u^{k},x^{k}-u^{k}\right\rangle \leq0.
\end{equation}
By Lemma \ref{Lemma:Takahashi}, $\left\{  u^{k}\right\}  _{k=0}^{\infty}$
converges strongly to some $u^{\ast}\in\Gamma$. Therefore%
\begin{equation}
\left\langle x^{\ast}-u^{\ast},x^{\ast}-u^{\ast}\right\rangle \leq0
\end{equation}
and hence $u^{\ast}=x^{\ast}$, as asserted.
\end{proof}

\begin{remark}
1. The sequence $\left\{  y^{k}\right\}  _{k=0}^{\infty}$ also converges
weakly to $x^{\ast}\in\Gamma$.

2. Under the additional assumptions that $C$ and $Q$ are symmetric, and $f$
and $g$ are odd, that is, $f(-x)=-f(x)$ and $g(-x) = -g(x)$ for all
$x\in\mathcal{H}$, we get from
\cite[Corollary 2.1]{bbr} that any sequence $\left\{  x^{k}\right\}
_{k=0}^{\infty},$ generated by Algorithm \ref{Alg-Alternating}, converges
strongly to a point $x^{\ast}\in\Gamma$.

3. Strong convergence also occurs when either $C$ or $Q$ is compact.

4. According to \cite[Corollary 2.2]{bbr}, if $\Gamma=\emptyset,$ then
$\lim_{_{k\rightarrow\infty}}\left\Vert x^{k}\right\Vert =\infty$.

5.When $C$ and $Q$ are closed subspaces and $f=g=0$ in the two-set CSVIP
(\ref{eq:vip1}) and (\ref{eq:vip2}), we get von Neumann's original problem and
then Algorithm \ref{Alg-Alternating} is the classical alternating projections
method (\ref{eq:apm}).

6. In \cite[Algorithm 3.1]{cgrs10} we presented an algorithm which can be
applied to the solution of (\ref{eq:vip1}) and (\ref{eq:vip2}). The structure
of this algorithm is quite different from that of Algorithm
\ref{Alg-Alternating} in the sense that at each step there is the need to
calculate the projection of the current iterate onto the intersection of three
half-spaces. Although the latter calculation \textit{complicates the process,
the sequence generated in this way converges strongly to a solution.}
\end{remark}

Following \cite{Reich} and \cite{dr}, we now present two more algorithms for
solving the two-set CSVIP (\ref{eq:vip1}) and (\ref{eq:vip2}). Let
$\mathcal{H}$ be a real Hilbert space, and let $C$ and $Q$ be two nonempty,
closed and convex subsets of $\mathcal{H}$.

Recall the following two lemmata \cite[Lemmata 1.3 and 1.4]{Reich}.

\begin{lemma}
\label{Lemma:3.7}A convex combination of strongly nonexpansive operators is
also strongly nonexpansive.
\end{lemma}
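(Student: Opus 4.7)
The plan is to verify the two defining properties of a strongly nonexpansive operator in turn for the convex combination $h := \sum_{i=1}^{n} \alpha_{i} h_{i}$, where each $h_{i}$ is strongly nonexpansive and $\alpha_{i}\geq 0$ with $\sum_{i}\alpha_{i}=1$. Nonexpansiveness of $h$ is immediate from the triangle inequality together with the nonexpansiveness of each $h_{i}$. The substantive task is then to verify the asymptotic regularity condition: whenever $\{x^{k}-y^{k}\}$ is bounded and $\Vert x^{k}-y^{k}\Vert - \Vert h(x^{k})-h(y^{k})\Vert \to 0$, to conclude that $(x^{k}-y^{k})-(h(x^{k})-h(y^{k}))\to 0$.

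The first step would be to sandwich the quantity $\Vert h(x^{k})-h(y^{k})\Vert$ between $\sum_{i}\alpha_{i}\Vert h_{i}(x^{k})-h_{i}(y^{k})\Vert$ and $\Vert x^{k}-y^{k}\Vert$: the triangle inequality yields the lower-to-middle inequality, and the nonexpansiveness of each $h_{i}$ combined with $\sum_{i}\alpha_{i}=1$ yields the middle-to-upper. Since the extremes of this chain have difference tending to zero, every nonnegative quantity in between does as well. In particular, $\sum_{i}\alpha_{i}\bigl(\Vert x^{k}-y^{k}\Vert-\Vert h_{i}(x^{k})-h_{i}(y^{k})\Vert\bigr)\to 0$, and because each summand is nonnegative we can extract that $\Vert x^{k}-y^{k}\Vert-\Vert h_{i}(x^{k})-h_{i}(y^{k})\Vert \to 0$ for every index $i$ with $\alpha_{i}>0$.

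The second step is then immediate. For each such $i$, the hypothesis of strong nonexpansiveness of $h_{i}$ applies (the required boundedness of $\{x^{k}-y^{k}\}$ is part of our standing assumption), yielding $(x^{k}-y^{k})-(h_{i}(x^{k})-h_{i}(y^{k}))\to 0$. Writing
\[
(x^{k}-y^{k})-(h(x^{k})-h(y^{k})) = \sum_{i}\alpha_{i}\bigl[(x^{k}-y^{k})-(h_{i}(x^{k})-h_{i}(y^{k}))\bigr],
\]
where again we use $\sum_{i}\alpha_{i}=1$, the desired limit follows termwise. Indices with $\alpha_{i}=0$ drop out of $h$ entirely and can be ignored.

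The only nontrivial move is the sandwich/extraction argument showing that the aggregate norm gap forces the individual norm gaps to vanish for every $i$ with $\alpha_{i}>0$; this is what couples the convex-combination decay to the decay of the per-operator gaps and is really the heart of the lemma. The remainder is linearity bookkeeping.
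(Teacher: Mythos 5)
Your proof is correct. The paper itself gives no proof of this lemma --- it is quoted verbatim from \cite[Lemma 1.3]{Reich} --- and your argument (sandwiching $\Vert h(x^{k})-h(y^{k})\Vert$ between $\sum_{i}\alpha_{i}\Vert h_{i}(x^{k})-h_{i}(y^{k})\Vert$ and $\Vert x^{k}-y^{k}\Vert$, extracting the vanishing of each nonnegative per-operator gap with $\alpha_{i}>0$, and then applying the strong nonexpansiveness of each $h_{i}$ termwise) is essentially the standard argument given in that reference.
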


\begin{lemma}
\label{Lemma:3.8}Let $T$ be a convex combination of the strongly nonexpansive
mappings $\left\{  T_{k}\mid1\leq k\leq m\right\}  $. If the set
$\; \cap\left\{\operatorname*{Fix}\left(  T_{k}\right)  \mid1\leq k\leq m\right\}  $ is not
empty, then it equals $\operatorname*{Fix}\left(  T\right)  $.
\end{lemma}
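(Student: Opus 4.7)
The inclusion $\bigcap_{k=1}^{m}\operatorname{Fix}(T_k)\subseteq \operatorname{Fix}(T)$ is the easy direction: writing $T=\sum_{k=1}^{m}\alpha_k T_k$ with $\alpha_k>0$ and $\sum_k \alpha_k=1$, any $p$ fixed by every $T_k$ satisfies $Tp=\sum_k \alpha_k T_k p=\sum_k \alpha_k p=p$. So the content of the lemma is the reverse inclusion, and my plan is to extract it by combining the triangle inequality with the definition of strong nonexpansivity.

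Fix $x\in \operatorname{Fix}(T)$ and pick any $p\in F:=\bigcap_{k=1}^{m}\operatorname{Fix}(T_k)$, which exists by hypothesis. Each $T_k$, being strongly nonexpansive, is in particular nonexpansive, so $\|T_k x-p\|=\|T_k x-T_k p\|\le \|x-p\|$ for every $k$. The plan is to show that each of these inequalities is in fact an equality. Using $Tx=x$ and the convexity of the combination, I write
\begin{equation}
\|x-p\|=\|Tx-p\|=\Bigl\|\sum_{k=1}^{m}\alpha_k\bigl(T_k x-p\bigr)\Bigr\|\le \sum_{k=1}^{m}\alpha_k\|T_k x-p\|\le \sum_{k=1}^{m}\alpha_k\|x-p\|=\|x-p\|.
\end{equation}
Equality must therefore hold throughout, and since each summand $\alpha_k(\|x-p\|-\|T_k x-p\|)$ is nonnegative and $\alpha_k>0$, I conclude that $\|T_k x-p\|=\|x-p\|$ for every $k=1,\dots,m$.

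This is the step where strong nonexpansivity (rather than mere nonexpansivity) is indispensable. To exploit it, I introduce the constant sequences $x^n:=x$ and $y^n:=p$ for all $n\in\mathbb{N}$. Then $\{x^n-y^n\}_{n=1}^{\infty}$ is bounded, and $\|x^n-y^n\|-\|T_k x^n-T_k y^n\|=\|x-p\|-\|T_k x-p\|=0$ converges trivially to $0$. Condition (vii) of the definition of strong nonexpansivity therefore yields $(x^n-y^n)-(T_k x^n-T_k y^n)\to 0$, that is, the constant vector $(x-p)-(T_k x-p)=x-T_k x$ is zero. Hence $T_k x=x$ for each $k$, so $x\in F$, which completes the reverse inclusion.

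The only real obstacle in this plan is the triangle-inequality step, and it dissolves provided one keeps in mind that all weights $\alpha_k$ are strictly positive (this is implicit in the notion of a convex combination with $m$ mappings appearing nontrivially; if some $\alpha_k=0$, the corresponding $T_k$ simply drops out of both $T$ and the intersection under consideration). Everything else is a direct application of the definitions and of Lemma~\ref{Lemma:3.7} is not even required for this particular statement—Lemma~\ref{Lemma:3.7} is the complementary fact that $T$ itself is again strongly nonexpansive, which is used elsewhere.
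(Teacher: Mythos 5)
Your argument is correct. Note that the paper itself gives no proof of this lemma -- it is recalled verbatim from \cite[Lemma 1.4]{Reich} -- so there is nothing internal to compare against; your proof (equality forced through the triangle inequality, then strong nonexpansivity applied to constant sequences to upgrade $\Vert T_{k}x-p\Vert=\Vert x-p\Vert$ to $T_{k}x=x$) is essentially the standard argument from that reference, and your remark that the weights must be strictly positive is a legitimate and necessary reading of ``convex combination'' here.
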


Now we can propose the following parallel algorithm.

\begin{algorithm}
\label{Alg-Alternating-1}$\left.  {}\right.  $

\textbf{Initialization:} Select an arbitrary starting point $x^{0}%
\in\mathcal{H}$, and let the numbers $w_{1}$ and $w_{2}$ be such that
$w_{1},w_{2}\geq0$ and $w_{1}+w_{2}=1$.

\textbf{Iterative step:} Given the current iterate $x^{k},$ compute%
\begin{equation}
x^{k+1}=w_{1}P_{C}\left(  x^{k}-\lambda f\left(  x^{k}\right)  \right)
+w_{2}P_{Q}\left(  x^{k}-\lambda g\left(  x^{k}\right)  \right)  .
\label{eq:convex-comb}%
\end{equation}

\end{algorithm}

\begin{theorem}
\label{Theorem:alternating-1} Let $\mathcal{H}$ be a real Hilbert space, and
let $C$ and $Q$ be two nonempty, closed and convex subsets of $\mathcal{H}$.
Assume that Conditions \ref{Condition:a}-\ref{Condition:c} hold. Then any
sequence $\left\{  x^{k}\right\}  _{k=0}^{\infty}$ generated by Algorithm
\ref{Alg-Alternating-1} converges weakly to a point $x^{\ast}\in\Gamma,$ and
furthermore,%
\begin{equation}
x^{\ast}=\lim_{k\rightarrow\infty}P_{\Gamma}(x^{k}).
\end{equation}

\end{theorem}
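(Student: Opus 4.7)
The approach is to collapse the iteration (\ref{eq:convex-comb}) into a single averaged-operator iteration $x^{k+1} = T(x^{k})$ with $T := w_{1} T_{1} + w_{2} T_{2}$, where $T_{1} := P_{C}(I-\lambda f)$ and $T_{2} := P_{Q}(I-\lambda g)$. The plan is to show $T$ is averaged with $\operatorname{Fix}(T) = \Gamma$, apply Opial's theorem (Theorem \ref{The:Opial}) to obtain weak convergence, and then identify the weak limit with $\lim_{k} P_{\Gamma}(x^{k})$ using Fej\'{e}r monotonicity and Lemma \ref{Lemma:Takahashi}, mirroring the last part of the proof of Theorem \ref{Theorem:1}.

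The first step is to invoke Lemma \ref{Lemma:av}: since $\lambda \in (0, 2\alpha) \subset (0, 2\alpha_{i})$ for $i=1,2$, both $T_{1}$ and $T_{2}$ are averaged, say $T_{i} = (1-c_{i}) I + c_{i} N_{i}$ with $N_{i}$ nonexpansive and $c_{i} \in (0,1)$. The second step is to verify that the convex combination $T$ is itself averaged; setting $c := w_{1} c_{1} + w_{2} c_{2} \in (0,1)$, a direct rearrangement gives
\[
T = (1-c) I + c\Bigl( \tfrac{w_{1} c_{1}}{c} N_{1} + \tfrac{w_{2} c_{2}}{c} N_{2} \Bigr),
\]
and the operator in parentheses is a convex combination of nonexpansive operators, hence nonexpansive, so $T$ is $c$-averaged. (Alternatively, Remark \ref{remark:av}(v) together with Lemma \ref{Lemma:3.7} at least yields that $T$ is strongly nonexpansive.) The third step is to identify $\operatorname{Fix}(T)$: by (\ref{eq:fix-vip}) we have $\operatorname{Fix}(T_{1}) = SOL(C,f)$ and $\operatorname{Fix}(T_{2}) = SOL(Q,g)$, so Condition \ref{Condition:c} gives $\operatorname{Fix}(T_{1}) \cap \operatorname{Fix}(T_{2}) = \Gamma \neq \emptyset$, and Lemma \ref{Lemma:3.8} then yields $\operatorname{Fix}(T) = \Gamma$. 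Theorem \ref{The:Opial} applied to the averaged $T$ now produces a weak limit $x^{\ast} \in \Gamma$.

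For the identification $x^{\ast} = \lim_{k} P_{\Gamma}(x^{k})$, nonexpansiveness of $T$ gives, for every $z \in \Gamma = \operatorname{Fix}(T)$,
\[
\|x^{k+1} - z\| = \|T(x^{k}) - T(z)\| \leq \|x^{k} - z\|,
\]
so $\{x^{k}\}$ is Fej\'{e}r monotone with respect to $\Gamma$. Lemma \ref{Lemma:Takahashi} then produces a strong limit $u^{\ast} := \lim_{k} P_{\Gamma}(x^{k}) \in \Gamma$, and applying the projection characterization (\ref{eq:Proj}) with $D = \Gamma$ and $y = x^{\ast}$ yields $\langle x^{\ast} - P_{\Gamma}(x^{k}), x^{k} - P_{\Gamma}(x^{k}) \rangle \leq 0$. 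Passing to the limit using $x^{k} \rightharpoonup x^{\ast}$ and $P_{\Gamma}(x^{k}) \to u^{\ast}$ forces $\|x^{\ast} - u^{\ast}\|^{2} \leq 0$, hence $x^{\ast} = u^{\ast}$.

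The main technical obstacle is that the preliminaries do not explicitly record ``a convex combination of averaged operators is averaged'', so the short direct argument above has to be supplied; equivalently, one could route the argument through Lemma \ref{Lemma:3.7} (which is already in the paper) and quote a weak-convergence result for strongly nonexpansive operators satisfying Condition (W), since by Remark \ref{remark:av}(v) each $T_{i}$ is of this class. Apart from that point, the argument parallels the proof of Theorem \ref{Theorem:1} with the composition of the two averaged factors replaced by their convex combination and Lemma \ref{Lemma:fix} replaced by Lemma \ref{Lemma:3.8}.
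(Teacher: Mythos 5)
Your proof is correct, and it takes a somewhat different (and more self-contained) route than the paper. The paper's own proof is very short: it observes via Lemma \ref{Lemma:av} and Remark \ref{remark:av}(v) that $P_{C}(I-\lambda f)$ and $P_{Q}(I-\lambda g)$ are averaged, hence strongly nonexpansive, invokes Lemma \ref{Lemma:3.7} to conclude that their convex combination $T$ is strongly nonexpansive, and then delegates the weak convergence of the iteration (and the identification of the limit) to the Bruck--Reich reference \cite{br} together with Lemma \ref{Lemma:3.8}. You instead prove directly that a convex combination of averaged operators is averaged -- a fact the paper never records, and your explicit decomposition $T=(1-c)I+cN$ with $c=w_{1}c_{1}+w_{2}c_{2}$ is the right one (with the harmless degenerate case $w_{1}=0$ or $w_{2}=0$) -- which lets you apply Opial's theorem (Theorem \ref{The:Opial}) exactly as in the proof of Theorem \ref{Theorem:1}, and then you carry out the Fej\'{e}r-monotonicity and Lemma \ref{Lemma:Takahashi} argument in full to show $x^{\ast}=\lim_{k}P_{\Gamma}(x^{k})$. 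What your approach buys is self-containment: every ingredient is already stated in the paper, and the last assertion of the theorem is actually proved rather than attributed to an external source. What the paper's approach buys is brevity and slightly greater generality, since strong nonexpansiveness is a weaker property than averagedness and the cited machinery of \cite{br} applies in that broader setting. Both routes use Lemma \ref{Lemma:3.8} identically to identify $\operatorname{Fix}(T)$ with $\Gamma$, so the overall architecture coincides; only the convergence engine differs.
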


\begin{proof}
By Lemma \ref{Lemma:av}, the operators $P_{C}(I-\lambda f)$ and $P_{Q}%
(I-\lambda g)$ are averaged, hence strongly nonexpansive (see Remark
\ref{remark:av}). According to Lemma \ref{Lemma:3.7}, any convex combination
of strongly nonexpansive mappings is also strongly nonexpansive. So the
sequence $\left\{  x^{k}\right\}  _{k=0}^{\infty}$ generated by Algorithm
\ref{Alg-Alternating-1} is, in fact, an iteration of a strongly nonexpansive
operator and therefore the desired result is obtained by \cite{br} and Lemma
\ref{Lemma:3.8}.
\end{proof}

\begin{remark}
The convergence obtained in Theorem \ref{Alg-Alternating-1} is not strong in
general \cite{bmr}.
\end{remark}

Finally, we recall the following theorem \cite[Theorem 1]{dr}.

\begin{theorem}
Let $T_{1}:\mathcal{H}\rightarrow\mathcal{H}$ and $T_{2}:\mathcal{H}%
\rightarrow\mathcal{H}$ be two nonexpansive operators which satisfy Condition
(W), the fixed point sets of which have a nonempty intersection. Then any
unrestricted product from $T_{1}$ and $T_{2}$ converges weakly to a common
fixed point.
\end{theorem}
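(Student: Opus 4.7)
My plan is to follow the Dye--Reich \cite{dr} scheme, proceeding through Fej\'er-monotonicity, a weak asymptotic regularity supplied by Condition (W), and Opial's lemma for Fej\'er-monotone sequences. Fix $x^0 \in \mathcal{H}$ and consider the iterates $x^{n+1} := T_{r(n+1)}(x^n)$, with $r(n+1) \in \{1,2\}$ and both indices appearing infinitely often in an unrestricted product. Set $F := \operatorname{Fix}(T_1) \cap \operatorname{Fix}(T_2)$ and $A_i := \{n : r(n+1) = i\}$, each infinite.

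For any $z \in F$, nonexpansiveness of $T_{r(n+1)}$ yields $\|x^{n+1} - z\| \leq \|x^n - z\|$, so $\{x^n\}$ is Fej\'er-monotone with respect to $F$, is bounded, and $\|x^n - z\|$ decreases to some $d_z \geq 0$. In particular $\|x^n - z\| - \|x^{n+1} - z\| \to 0$; along $n \in A_i$ this reads $\|x^n - z\| - \|T_i(x^n) - T_i(z)\| \to 0$. Applying Condition (W) to $T_i$ with $x^k$ running over $\{x^n\}_{n \in A_i}$ and the constant sequence $y^k = z$ yields
\[
x^n - T_i(x^n) \rightharpoonup 0 \quad \text{along } n \in A_i.
\]

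By Opial's lemma for Fej\'er-monotone sequences, the theorem reduces to showing that every weak sequential cluster point $x^*$ of $\{x^n\}$ lies in $F$. Given $x^{n_k} \rightharpoonup x^*$, a thinning argument produces a subsequence with all $n_k$ in a prescribed $A_i$, and then the previous step gives $T_i(x^{n_k}) \rightharpoonup x^*$ as well. It remains to deduce $T_i(x^*) = x^*$ from this simultaneous weak convergence together with nonexpansiveness, and to obtain the analogous identity for the other operator.

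The main obstacle is precisely this deduction: Condition (W) provides only a \emph{weak} asymptotic regularity $x^n - T_i(x^n) \rightharpoonup 0$, so the classical Opial demiclosedness principle (which requires a \emph{strong} one) cannot be invoked directly. My plan is to argue by contradiction, assuming $T_i(x^*) \neq x^*$ and combining Opial's property of the Hilbert space with the Lipschitz bound $\|T_i(x^{n_k}) - T_i(x^*)\| \leq \|x^{n_k} - x^*\|$, applied simultaneously to the two weakly convergent sequences $\{x^{n_k}\}$ and $\{T_i(x^{n_k})\}$ with common weak limit $x^*$, to extract a contradictory strict inequality. To obtain the analogous identity for the other operator, I would locate a subsequence indexed in the complementary $A_j$ whose weak limit is still $x^*$ by iterating the weak regularity to shift along the orbit and exploiting the monotone convergence of $\|x^n - z\|$ along the entire sequence together with the infiniteness of $A_j$. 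Once $x^* \in F$, Opial's lemma produces weak convergence of the full sequence to a single common fixed point.
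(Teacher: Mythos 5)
This statement is quoted in the paper from \cite[Theorem 1]{dr} without proof, so there is no in-paper argument to compare with; I can only assess your sketch on its own merits. Your skeleton (Fej\'er-monotonicity with respect to $F$, Condition (W) applied with the constant sequence $y^k=z$ to get $x^n-T_i(x^n)\rightharpoonup 0$ along $A_i$, then Opial's lemma once all weak cluster points are shown to lie in $F$) is the right one, and the first two steps are carried out correctly. But the step you yourself flag as the main obstacle does not close with the ingredients you list. ``Weak-weak'' demiclosedness is genuinely false for nonexpansive maps: one can have $u^k\rightharpoonup p$ and $T(u^k)\rightharpoonup p$ with $T$ nonexpansive and $T(p)\neq p$ (in $\ell^2$ take $u^k=e_k$, define $T(e_k)=\tfrac12 e_{k+1}$ and $T(0)=\tfrac12 e_1$ on this set --- these assignments are nonexpansive --- and extend by Kirszbraun; then $p=0$ but $T(0)\neq 0$). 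So Opial's property plus the Lipschitz bound plus the common weak limit of $\{x^{n_k}\}$ and $\{T_i(x^{n_k})\}$ cannot suffice. What rescues the argument in the present setting, and what is missing from your plan, is the equality of the asymptotic radii of the two sequences about $x^\ast$: since $\Vert x^n-f\Vert\rightarrow d$ along the \emph{whole} orbit for a fixed $f\in F$, and since $u^k\rightharpoonup p$ implies $\lim\Vert u^k-w\Vert^2=\lim\Vert u^k-p\Vert^2+\Vert p-w\Vert^2$ in Hilbert space, one gets $\lim_k\Vert x^{n_k}-x^\ast\Vert^2=d^2-\Vert x^\ast-f\Vert^2=\lim_k\Vert T_i(x^{n_k})-x^\ast\Vert^2$; only then does the computation $\limsup\Vert T_i(x^{n_k})-T_i(x^\ast)\Vert^2=\lim\Vert T_i(x^{n_k})-x^\ast\Vert^2+\Vert x^\ast-T_i(x^\ast)\Vert^2\leq\lim\Vert x^{n_k}-x^\ast\Vert^2$ force $T_i(x^\ast)=x^\ast$. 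In the counterexample above the two radii are $1$ and $\tfrac12$, which is exactly why it evades your intended contradiction.

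The second gap is the transfer to the other operator. You propose to reach a subsequence indexed in $A_j$ with the same weak limit ``by iterating the weak regularity to shift along the orbit,'' but between an index $n_k\in A_i$ and the next index in $A_j$ there may be an unbounded number of applications of $T_i$, and a sum of unboundedly many weakly null increments need not be weakly null; so the assertion that the shifted subsequence still converges weakly to $x^\ast$ is unsubstantiated as stated. This is in fact the delicate point of the Dye--Reich theorem and again has to be handled through the asymptotic-radius bookkeeping relative to $f\in F$, now combined with the already established identity $T_i(x^\ast)=x^\ast$ (which yields $\Vert x^{m}-x^\ast\Vert\leq\Vert x^{n_k}-x^\ast\Vert$ for the intermediate indices $m$). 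As written, your proposal is an honest plan whose two crucial steps are not yet proofs.
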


Since every averaged operator is strongly nonexpansive and therefore satisfies
Condition (W), we can apply the above theorem to obtain an algorithm for
solving the two-set CSVIP by using any unrestricted product from
$P_{C}(I-\lambda f)$ and $P_{Q}(I-\lambda g).$ Any such unrestricted product
converges weakly to a point in $\Gamma.$

\section{The general CSVIP\label{sec:General-CSVIP}}

In this section we extend our algorithm to two methods for solving the general
CSVIP with single-valued operators. Let $\mathcal{H}$ be a real Hilbert space.
Let there be given, for each $i=1,2,\ldots,N$, an operator $f_{i}%
:\mathcal{H}\rightarrow\mathcal{H}$ and a nonempty, closed and convex subset
$C_{i}\subset\mathcal{H}$, with $\bigcap_{i=1}^{N}C_{i}\neq\emptyset$. The
CSVIP is to find a point $x^{\ast}\in\bigcap_{i=1}^{N}C_{i}$ such that, for
each $i=1,2,\ldots,N,$%
\begin{equation}
\left\langle f_{i}(x^{\ast}),x-x^{\ast}\right\rangle \geq0\text{ for all }x\in
C_{i}\text{, }i=1,2,\ldots,N.
\end{equation}
Denote $\Psi:=\bigcap_{i=1}^{N}SOL(C_{i},f_{i})$.

\begin{algorithm}
\label{Alg-Alternating-2}$\left.  {}\right.  $

\textbf{Initialization:} Select an arbitrary starting point $x^{0}%
\in\mathcal{H}$.

\textbf{Iterative step:} Given the current iterate $x^{k},$ compute the
product%
\begin{equation}
x^{k+1}=\prod\limits_{i=1}^{N}\left(  P_{C_{i}}(I-\lambda f_{i})\right)
(x^{k}).
\end{equation}

\end{algorithm}

\begin{theorem}
\label{Theorem:1*} Let $\mathcal{H}$ be a real Hilbert space. For each
$i=1,2,\ldots,N$, let an operator $f_{i}:\mathcal{H}\rightarrow\mathcal{H}$
and a nonempty, closed and convex subset $C_{i}\subset\mathcal{H}$ be given.
Assume that $\bigcap_{i=1}^{N}C_{i}\neq\emptyset$, $\Psi\neq\emptyset$ and
that for $i=1,2,\ldots,N,$ $f_{i}$ is $\alpha_{i}$-ism. Set $\alpha:=\min
_{i}\{\alpha_{i}\}$ and take $\lambda\in(0,2\alpha)$. Then any sequence
$\left\{  x^{k}\right\}  _{k=0}^{\infty}$ generated by Algorithm
\ref{Alg-Alternating-2} converges weakly to a point $x^{\ast}\in\Psi,$ and
furthermore,%
\begin{equation}
x^{\ast}=\lim_{k\rightarrow\infty}P_{\Psi}(x^{k}).
\end{equation}

\end{theorem}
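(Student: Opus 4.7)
The plan is to follow almost verbatim the strategy of the proof of Theorem \ref{Theorem:1}, replacing the two-operator composition by the $N$-operator composition and using an inductive application of Lemma \ref{Lemma:fix} and Remark \ref{remark:av}(iv). First I would invoke Lemma \ref{Lemma:av} to conclude that, for each $i=1,2,\ldots,N$, the operator $T_{i}:=P_{C_{i}}(I-\lambda f_{i})$ is averaged whenever $\lambda\in(0,2\alpha_{i})$; since $\alpha=\min_{i}\alpha_{i}$ and $\lambda\in(0,2\alpha)$, this applies simultaneously to all $T_{i}$. Then, by an easy induction using Remark \ref{remark:av}(iv), the composition $T:=T_{1}T_{2}\cdots T_{N}$ is itself averaged (say $c$-av for some $c\in(0,1)$).

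Next I would identify $\operatorname*{Fix}(T)$ with $\Psi$. Here is where the main bookkeeping happens: an induction on $N$ based on Lemma \ref{Lemma:fix} yields
\[
\operatorname*{Fix}(T_{1}T_{2}\cdots T_{N})=\bigcap_{i=1}^{N}\operatorname*{Fix}(T_{i}),
\]
provided the intersection is nonempty. Assumption $\Psi\neq\emptyset$ together with (\ref{eq:fix-vip}) gives $\bigcap_{i=1}^{N}\operatorname*{Fix}(T_{i})=\Psi\neq\emptyset$, so the hypothesis of Lemma \ref{Lemma:fix} is satisfied at each step of the induction. Applying Opial's theorem (Theorem \ref{The:Opial}) to the averaged operator $T$ with nonempty fixed point set $\Psi$ yields that any sequence $\{x^{k}\}_{k=0}^{\infty}$ produced by Algorithm \ref{Alg-Alternating-2} converges weakly to some $x^{\ast}\in\Psi$.

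For the final assertion, I would establish Fej\'{e}r-monotonicity of $\{x^{k}\}$ with respect to $\Psi$. For any $z\in\Psi$, relation (\ref{eq:fix-vip}) gives $T_{i}(z)=z$ for every $i$, and nonexpansiveness of each $T_{i}$ propagates through the composition exactly as in the inequality chain at the end of the proof of Theorem \ref{Theorem:1}:
\[
\Vert x^{k+1}-z\Vert=\Vert T_{1}\cdots T_{N}(x^{k})-T_{1}\cdots T_{N}(z)\Vert\leq\Vert x^{k}-z\Vert.
\]
Hence $\{x^{k}\}$ is Fej\'{e}r-monotone with respect to $\Psi$, so by Lemma \ref{Lemma:Takahashi} (noting that $\Psi$ is nonempty, closed and convex because each $\operatorname*{Fix}(T_{i})=SOL(C_{i},f_{i})$ is closed and convex by Goebel--Reich \cite{Goebel}) the sequence $\{u^{k}\}:=\{P_{\Psi}(x^{k})\}$ converges strongly to some $u^{\ast}\in\Psi$.

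To close, I would repeat the projection-characterization argument of Theorem \ref{Theorem:1}: applying (\ref{eq:Proj}) with $D=\Psi$, $x=x^{k}$ and $y=x^{\ast}\in\Psi$ yields $\langle x^{\ast}-u^{k},x^{k}-u^{k}\rangle\leq0$. Passing to the limit (using $x^{k}\rightharpoonup x^{\ast}$ and $u^{k}\rightarrow u^{\ast}$) gives $\Vert x^{\ast}-u^{\ast}\Vert^{2}\leq0$, so $x^{\ast}=u^{\ast}=\lim_{k\to\infty}P_{\Psi}(x^{k})$. I expect the only nontrivial point to be the inductive identification of $\operatorname*{Fix}(T_{1}\cdots T_{N})$ with $\bigcap_{i}\operatorname*{Fix}(T_{i})$; one must check at each inductive step that the hypothesis of Lemma \ref{Lemma:fix} (nonempty intersection of fixed points) survives, which it does because the full intersection $\Psi$ is contained in every partial intersection that appears.
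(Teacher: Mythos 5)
Your proposal is correct and is essentially the proof the paper intends: the paper omits the argument for Theorem \ref{Theorem:1*}, stating only that it is analogous to that of Theorem \ref{Theorem:1}, and your write-up carries out exactly that analogy, with the inductive use of Remark \ref{remark:av}(iv) and Lemma \ref{Lemma:fix} (checking that $\Psi\neq\emptyset$ keeps every partial intersection of fixed point sets nonempty) being the only genuinely new bookkeeping required.
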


\begin{algorithm}
\label{Alg-Alternating-3}$\left.  {}\right.  $

\textbf{Initialization:} Select an arbitrary starting point $x^{0}%
\in\mathcal{H}$ and a nonnegative finite sequence $\left\{  w_{i}\right\}
_{i=1}^{N}$ such that $\sum\limits_{i=1}^{N}w_{i}=1.$

\textbf{Iterative step:} Given the current iterate $x^{k},$ compute%
\begin{equation}
x^{k+1}=\sum\limits_{i=1}^{N}w_{i}\left(  P_{C_{i}}(I-\lambda f_{i})\right)
(x^{k}).
\end{equation}

\end{algorithm}

\begin{theorem}
\label{Theorem:1**} Let $\mathcal{H}$ be a real Hilbert space. For each
$i=1,2,\ldots,N$, let an operator $f_{i}:\mathcal{H}\rightarrow\mathcal{H}$
and a nonempty, closed and convex subset $C_{i}\subset\mathcal{H}$ be given.
Assume that $\bigcap_{i=1}^{N}C_{i}\neq\emptyset$, $\Psi\neq\emptyset,$ and
that for $i=1,2,\ldots,N,$ $f_{i}$ is $\alpha_{i}$-ism. Set $\alpha:=\min
_{i}\{\alpha_{i}\}$ and take $\lambda\in(0,2\alpha)$. Then any sequence
$\left\{  x^{k}\right\}  _{k=0}^{\infty}$ generated by Algorithm
\ref{Alg-Alternating-3} converges weakly to a point $x^{\ast}\in\Psi,$ and
furthermore,%
\begin{equation}
x^{\ast}=\lim_{k\rightarrow\infty}P_{\Psi}(x^{k}).
\end{equation}

\end{theorem}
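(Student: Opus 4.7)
The plan is to mimic the proof of Theorem \ref{Theorem:alternating-1}, replacing the two-operator convex combination by the $N$-operator one, and then to recover the identification $x^{\ast} = \lim_k P_{\Psi}(x^k)$ by the same Fej\'{e}r-monotonicity argument used at the end of the proof of Theorem \ref{Theorem:1}.

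First, I would note that the hypothesis $\lambda \in (0, 2\alpha) \subseteq (0, 2\alpha_i)$ for every $i$, so Lemma \ref{Lemma:av} applies to each operator $T_i := P_{C_i}(I - \lambda f_i)$ and yields that each $T_i$ is averaged. By Remark \ref{remark:av}(v) each $T_i$ is therefore strongly nonexpansive. By (\ref{eq:fix-vip}) we have $\operatorname{Fix}(T_i) = SOL(C_i, f_i)$, so $\bigcap_{i=1}^N \operatorname{Fix}(T_i) = \Psi$, which is nonempty by assumption. Set $T := \sum_{i=1}^N w_i T_i$, so the iteration reads $x^{k+1} = T(x^k)$.

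Next, I would apply Lemma \ref{Lemma:3.7} to conclude that $T$ is strongly nonexpansive (being a convex combination of the $T_i$), and Lemma \ref{Lemma:3.8} to conclude that $\operatorname{Fix}(T) = \Psi$. The iteration $x^{k+1} = T(x^k)$ of a strongly nonexpansive operator whose fixed-point set is nonempty converges weakly to a point $x^{\ast} \in \operatorname{Fix}(T) = \Psi$ by the general result of Bruck and Reich \cite{br} (which is precisely the mechanism already invoked in the proof of Theorem \ref{Theorem:alternating-1}).

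It remains to show that $x^{\ast} = \lim_k P_{\Psi}(x^k)$. For this, I would argue exactly as in the last part of the proof of Theorem \ref{Theorem:1}: for any $z \in \Psi$ one has $T_i(z) = z$ for all $i$, hence $T(z) = z$, and nonexpansiveness of $T$ yields
\begin{equation}
\|x^{k+1} - z\| = \|T(x^k) - T(z)\| \leq \|x^k - z\|,
\end{equation}
so $\{x^k\}$ is Fej\'{e}r-monotone with respect to $\Psi$. Since each $T_i$ is nonexpansive and has nonempty fixed-point set, $SOL(C_i, f_i)$ is closed and convex (cf. \cite[Proposition 5.3]{Goebel}), hence so is $\Psi$, and $u^k := P_{\Psi}(x^k)$ is well defined. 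By Lemma \ref{Lemma:Takahashi} the sequence $\{u^k\}$ converges strongly to some $u^{\ast} \in \Psi$. Using the projection characterization (\ref{eq:Proj}) with $y = x^{\ast} \in \Psi$ gives $\langle x^{\ast} - u^k, x^k - u^k\rangle \leq 0$, and passing to the weak/strong limit yields $\|x^{\ast} - u^{\ast}\|^2 \leq 0$, so $u^{\ast} = x^{\ast}$.

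I do not anticipate any real obstacle: the proof is essentially a book-keeping extension of Theorem \ref{Theorem:alternating-1} to $N$ terms, and every technical ingredient (strong nonexpansiveness of averaged operators, convex combinations, Fej\'{e}r monotonicity, Takahashi--Toyoda) has already been recorded above. The only mildly delicate point is the identification $\operatorname{Fix}(T) = \Psi$, which is where Lemma \ref{Lemma:3.8} is indispensable; without the nonemptiness assumption $\Psi \neq \emptyset$ one would only get the inclusion $\Psi \subseteq \operatorname{Fix}(T)$.
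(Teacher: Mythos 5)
Your proposal is correct and follows exactly the route the paper intends: the paper omits the proof of Theorem \ref{Theorem:1**}, stating only that it is analogous to that of Theorem \ref{Theorem:alternating-1}, and your argument is precisely that analogue (averagedness of each $P_{C_i}(I-\lambda f_i)$ via Lemma \ref{Lemma:av}, strong nonexpansiveness of the convex combination via Lemma \ref{Lemma:3.7}, identification of the fixed point set via Lemma \ref{Lemma:3.8} and (\ref{eq:fix-vip}), weak convergence via Bruck--Reich, and the Fej\'{e}r-monotonicity/Takahashi--Toyoda argument from the end of the proof of Theorem \ref{Theorem:1} for the identification of the limit). No gaps.
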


The proofs of Theorem \ref{Theorem:1*} and \ref{Theorem:1**} are analogous to
those of Theorems \ref{Theorem:1} and \ref{Theorem:alternating-1},
respectively, and therefore are omitted.\bigskip

\textbf{Acknowledgments}.

We gratefully acknowledge a referee's constructive comments that helped us to
improve the paper. This work was partially supported by United States-Israel
Binational Science Foundation (BSF) Grant number 200912, US Department of Army
Award number W81XWH-10-1-0170, Israel Science Foundation (ISF) Grant number
647/07, the Fund for the Promotion of Research at the Technion and by the
Technion VPR Fund.\bigskip

\end{document}